\documentclass[11pt]{amsart}

\usepackage{amsmath,amssymb,mathtools}
\usepackage{microtype}
\usepackage[hidelinks]{hyperref}

\newtheorem{theorem}{Theorem}[section]
\newtheorem{lemma}[theorem]{Lemma}
\newtheorem{proposition}[theorem]{Proposition}
\newtheorem{corollary}[theorem]{Corollary}
\theoremstyle{definition}

\theoremstyle{remark}
\newtheorem{remark}[theorem]{Remark}

\DeclareMathOperator{\Lip}{Lip}
\DeclareMathOperator{\ext}{ext}

\newcommand{\T}{\mathbb T}
\newcommand{\N}{\mathbb N}
\newcommand{\Z}{\mathbb Z}
\newcommand{\Pcal}{\mathcal P}
\newcommand{\Mcal}{\mathcal M}
\newcommand{\Scal}{\mathcal S}
\newcommand{\Bcal}{\mathcal B}

\makeatletter
\@namedef{subjclassname@2020}{%
  \textup{2020} Mathematics Subject Classification}
\makeatother

\title[Generic zero-entropy optimization]{Generic Zero-Entropy Optimization for Finitely Generated Nonlacunary Actions on the Circle}

\author{Hang Zhao}
\address{School of Mathematics and Information Science, Neijiang Normal University, Neijiang, Sichuan 641100, China}
\email{hangzhaoscu@163.com}
\thanks{ORCID: 0009-0003-2445-3477.}
\thanks{This work was partially supported by the National Natural Science Foundation of China (No. 123B2006).}

\subjclass[2020]{Primary 37A35; Secondary 37A05, 37A44}
\keywords{Ergodic optimization, multiplicative semigroup action, entropy rigidity, Haar measure, Sobolev potential, genericity}

\hypersetup{
  pdftitle={Generic Zero-Entropy Optimization for Finitely Generated Nonlacunary Actions on the Circle},
  pdfauthor={Hang Zhao},
  pdfsubject={Ergodic optimization for multiplicative semigroup actions},
  pdfkeywords={ergodic optimization, multiplicative semigroup action, entropy rigidity, Haar measure, Sobolev potential, genericity}
}

\begin{document}

\begin{abstract}
Let $T_n(x)=nx\pmod 1$ on $\T=\mathbb R/\mathbb Z$, and let
$\Sigma\subset\mathbb N$ be a finitely generated nonlacunary multiplicative
semigroup. We prove that, for every $1\le s\le\infty$, there is an open dense
set of potentials $f\in W^{1,s}(\T)$ such that every measure that maximizes
or minimizes $\int f\,d\mu$ over the $\Sigma$-invariant probability measures
satisfies $h_\mu(T_r)=0$ for all $r\in\Sigma\setminus\{1\}$. For each fixed
$s$, a single dense $G_\delta$ subset of $W^{1,s}(\T)$ works simultaneously
for all finitely generated nonlacunary multiplicative semigroups. The proof
combines Rudolph--Johnson entropy rigidity with periodic-grid perturbations
that generically exclude Haar measure from the optimizing faces. In
particular, without any uniqueness assumption, the result applies to
simultaneous $T_p,T_q$-invariance whenever $p,q\ge2$ are multiplicatively
independent.
\end{abstract}

\maketitle

\section{Introduction and main results}

The Rudolph--Johnson rigidity theorem~\cite{Rudolph,Johnson} states that a probability measure on the circle which is invariant under a nonlacunary multiplicative semigroup and ergodic for the semigroup action is either Haar measure or has zero entropy for every nontrivial map in the action. We prove that the latter alternative is generic among Lipschitz potentials when optimization is carried out over jointly invariant measures: in each space $W^{1,s}(\mathbb{T})$, by making an arbitrarily small change to the potential we can ensure that Haar measure is not an optimizer, and therefore every optimizer has zero entropy.

Let $\T=\mathbb R/\mathbb Z$, equipped with the quotient metric
$$
  d_\T(x,y)=\min_{k\in\Z}|x-y-k|,
$$
let $\lambda$ be normalized Lebesgue (Haar) measure, and, for $n\ge 2$, set
$$
  T_n(x)=nx\pmod 1.
$$
Let $F\subset\N_{\ge 2}$ be a nonempty finite set, and write
$$
  \Scal_F
  =\left\{\prod_{n\in F}n^{a_n}:a_n\in\Z_{\ge0}\right\},
  \qquad
  N_F=\prod_{n\in F}n.
$$
Thus $\Scal_F$ is the multiplicative semigroup generated by $F$.
Recall that a multiplicative semigroup $\Sigma\subset\N$ is \emph{nonlacunary}
if it is not contained in
$$
  \{d^k:k\in\Z_{\ge0}\}
  \qquad(d\ge2).
$$
Throughout the proof we assume that $\Scal_F$ is nonlacunary.
Let
$$
  \Mcal_F
  =\{\mu\in\Pcal(\T):(T_n)_*\mu=\mu\text{ for every }n\in F\}.
$$
Equivalently, $\Mcal_F$ is the set of $\Scal_F$-invariant Borel probability
measures.
It is nonempty, compact, and convex.
For a real-valued $f\in C(\T)$, define
$$
  \beta_F(f)=\max_{\mu\in\Mcal_F}\int f\,d\mu,
  \qquad
  \alpha_F(f)=\min_{\mu\in\Mcal_F}\int f\,d\mu,
$$
and let $\Mcal_F^{\max}(f)$ and $\Mcal_F^{\min}(f)$ be the sets on which
these values are attained.
For $1\le s\le\infty$, let
$$
  \Bcal_s=W^{1,s}(\T),
  \qquad
  \|f\|_{\Bcal_s}=\|f\|_{L^s}+\|f'\|_{L^s},
$$
where we identify each Sobolev class with its continuous $1$-periodic
absolutely continuous representative.
In one dimension, every $\Bcal_s$ is continuously embedded in $C(\T)$, and it is uniformly dense in $C(\T)$ because it contains the
trigonometric polynomials.
Moreover, $\Bcal_\infty=\Lip(\T)$, with an equivalent norm.

\begin{theorem}\label{thm:main}
Let $1\le s\le\infty$.
For every nonempty finite set $F\subset\N_{\ge2}$ such that $\Scal_F$
is nonlacunary, there exists an open dense set
$\mathcal G_{F,s}\subset\Bcal_s$ such that, for every
$f\in\mathcal G_{F,s}$ and every
$$
  \mu\in\Mcal_F^{\max}(f)\cup\Mcal_F^{\min}(f),
$$
we have
$$
  h_\mu(T_r)=0
  \qquad(r\in\Scal_F\setminus\{1\}).
$$
\end{theorem}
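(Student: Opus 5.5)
Set $\mathcal G_{F,s}=\{f\in\Bcal_s:\lambda\notin\Mcal_F^{\max}(f)\cup\Mcal_F^{\min}(f)\}$ and show it is open, dense, and has the required property.

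\emph{Reduction to excluding Haar measure.} Let $f\in\mathcal G_{F,s}$ and let $\mu\in\Mcal_F^{\max}(f)$ (the minimizing case is identical, with $f$ replaced by $-f$). The face $\Mcal_F^{\max}(f)$ is a closed face of the compact convex set $\Mcal_F$. Hence the ergodic decomposition of $\mu$ with respect to the $\Scal_F$-action is carried by ergodic elements of this face, i.e.\ by ergodic maximizing measures. By Rudolph--Johnson, each such ergodic component is either $\lambda$ or has zero entropy for every $T_r$ with $r\ne1$. Since $\lambda$ is not maximizing, $\lambda$ cannot occur: the ergodic decomposition is supported in the face, and $\lambda$ is not in the face. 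So every component has zero entropy. The entropy map $\nu\mapsto h_\nu(T_r)$ is affine and satisfies the ergodic decomposition formula. This holds for decomposition with respect to the $\Scal_F$-action as well, because $h(T_r)$ of a $T_r$-invariant measure equals the integral of the $T_r$-entropies of its components in any invariant decomposition into $T_r$-invariant measures (Jacobs' theorem). Therefore $h_\mu(T_r)=0$. I will check this step carefully: the components are $T_r$-invariant but not necessarily $T_r$-ergodic, which is fine for the affine integral formula.

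\emph{Openness.} The maps $f\mapsto\beta_F(f)$ and $f\mapsto\alpha_F(f)$ are $1$-Lipschitz in the sup norm, and $\Bcal_s\hookrightarrow C(\T)$ continuously. The condition $\lambda\notin\Mcal_F^{\max}(f)$ says $\beta_F(f)-\int f\,d\lambda>0$, which is an open condition; the same holds for the minimum. So $\mathcal G_{F,s}$ is open.

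\emph{Density (main step).} Given $f\in\Bcal_s$ and $\varepsilon>0$, we need $g$ with $\|g\|_{\Bcal_s}<\varepsilon$ such that $\int(f+g)\,d\lambda<\beta_F(f+g)$ and $\int(f+g)\,d\lambda>\alpha_F(f+g)$. Atomic invariant measures are the natural test measures. For $m$ coprime to $N_F$, the set $P_m=\{k/m\}$ is $\Scal_F$-invariant: each $T_n$ with $n\in F$ permutes $\Z/m$. Hence the uniform measure $\delta_{P_m}$ lies in $\Mcal_F$. More finely, one can use orbits of $\Scal_F$ on $(\Z/m)^\times$ for $m$ prime, which are small subsets of size equal to the order of the subgroup generated by $F$ mod $m$. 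The plan is to pick a prime $m$ and an $\Scal_F$-orbit $O\subset\{k/m\}$ of proportion $|O|/m\le 1/2$ (for instance, avoid the case where the generated subgroup is all of the units; or instead use the orbit $\{0\}$ itself, the fixed point $0$). The simplest choice is that $\delta_0\in\Mcal_F$. Then it suffices to ensure
\[
  (f+g)(0)>\int(f+g)\,d\lambda
\]
and, for the minimum, a second invariant measure $\nu$ with $\int(f+g)\,d\nu<\int(f+g)\,d\lambda$. If $f(0)\ne\int f\,d\lambda$ one side already works, so I will use two measures. The two measures are $\delta_0$ and $\nu_m$, the uniform measure on a nonzero orbit $O_m\subset\frac1m\Z$. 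Take $g=\eta\,\phi$, where $\phi$ is a small bump: a tent of height $1$ and width $\sim1/m$ centered at a point $c$ chosen away from $0$ and from $O_m$. Then $\int g\,d\lambda\approx\eta/m$, while $g$ vanishes at $0$ and on $O_m$. Its norm is $\|g\|_{\Bcal_s}\lesssim\eta\, m^{1-1/s}$. This changes the Haar integral only slightly, which does not suffice, so I will use instead a generic-$\eta$ argument. The quantity $\Delta(\eta)=\int(f+\eta\psi)\,d\lambda-(f+\eta\psi)(0)$ is affine in $\eta$ and nonconstant whenever $\int\psi\,d\lambda\ne\psi(0)$. Therefore, for all but one value of $\eta$, $\Delta(\eta)\ne0$, and likewise for the comparison with $\nu_m$. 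Choose $\psi$ (a trigonometric polynomial, say $\psi=\cos 2\pi x$, with $\int\psi\,d\lambda=0$ and $\psi(0)=1$) and adjust $\eta$ small. This makes $\lambda$ lose to $\delta_0$ on one side. The obstacle is obtaining strict inequality on \emph{both} sides simultaneously. I will arrange it with a two-parameter perturbation $\eta_1\psi_1+\eta_2\psi_2$, where the vectors $\bigl(\psi_i(0)-\int\psi_i\,d\lambda,\ \int\psi_i\,d\nu_m-\int\psi_i\,d\lambda\bigr)$ are linearly independent. With such a choice, small $(\eta_1,\eta_2)$ can force the first difference to be positive and the second negative. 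A suitable $m$ exists since the orbit $O_m\subset\frac1m\Z$ is a proper, non-equidistributed subset for suitable $m$, giving such trigonometric polynomials. Trigonometric polynomials have finite $\Bcal_s$ norm, so scaling gives density. The uniformity over all $F$ (a single $G_\delta$) would follow by intersecting these sets over the countably many $F$, but that is not needed here.
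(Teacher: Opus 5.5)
Your reduction (the decomposition of $\mu$ carried by the closed face $\Mcal_F^{\max}(f)$, Rudolph--Johnson on the components, and Jacobs' formula for components that are $T_r$-invariant but not $T_r$-ergodic) and your openness argument are correct and match the paper. The gap is in the density step. Write $D_\nu(h)=\int h\,d\nu-\int h\,d\lambda$, and let $\nu_{O_m}$ denote your orbit measure. Since $\|g\|_\infty\le\|g\|_{\Bcal_s}$, any $g$ with $\|g\|_{\Bcal_s}<\varepsilon$ changes $D_{\delta_0}$ and $D_{\nu_{O_m}}$ by less than $2\varepsilon$. Linear independence of your two vectors makes the affine map $(\eta_1,\eta_2)\mapsto(D_{\delta_0},D_{\nu_{O_m}})$ surjective onto $\mathbb R^2$. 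For small $(\eta_1,\eta_2)$, however, its image is only a small neighbourhood of $(D_{\delta_0}(f),D_{\nu_{O_m}}(f))$, and nothing places that point near the sign pattern you need.

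The potentials that fail are exactly the ones you have to move. If $f\in\mathcal A^{\max}_{\lambda,F,s}$, then $D_\nu(f)\le0$ for every $\nu\in\Mcal_F$. If in addition $f(0)<\int f\,d\lambda$ and $\int f\,d\nu_{O_m}<\int f\,d\lambda$, then for $\varepsilon$ below half the smaller gap no admissible $g$ lifts either test measure above $\lambda$. You therefore never certify $\lambda\notin\Mcal_F^{\max}(f+g)$, and the same problem arises for $\mathcal A^{\min}_{\lambda,F,s}$. Your argument implicitly assumes that optimality of $\lambda$ forces $D_{\delta_0}(f)=0$ or $D_{\nu_{O_m}}(f)=0$. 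That is a Ma\~n\'e-type rigidity statement for the joint action, and you do not prove it. The ``generic-$\eta$'' step has the same defect: it gives only $D_{\delta_0}(f+\eta\psi)\ne0$, not the sign you need.

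The missing idea is to let the test measure depend on $f$ and $\varepsilon$, and to have it converge to $\lambda$ faster than a $\Bcal_s$-small perturbation can separate it from $\lambda$. The paper uses the full grid measure $\nu_m=\frac1m\sum_{j=0}^{m-1}\delta_{j/m}$ with $m\equiv1\pmod{N_F}$, so that $\nu_m\in\Mcal_F$. It proves $D_{\nu_m}(f)=\frac1m\int_0^1(\{mx\}-\tfrac12)f'(x)\,dx=o(m^{-1})$ for each fixed $f\in W^{1,1}$. It then perturbs by $\pm\delta u_m$ with $u_m(x)=\cos(2\pi mx)/(2\pi m)$, which satisfies $\|u_m\|_{\Bcal_s}\le1+\frac1{2\pi}$ and $D_{\nu_m}(u_m)=\frac1{2\pi m}$. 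For large $m$, $D_{\nu_m}(f\pm\delta u_m)$ therefore has sign $\pm$. The little-$o$ rate is essential: $|D_{\nu_m}(g)|\le\frac1{2m}\|g'\|_{L^1}$ for every $g$, which is the scaling obstruction you already hit with your tent bump. Finally, since both exceptional sets are closed, it suffices to show separately that each has empty interior, using one test measure at a time. No simultaneous two-sided perturbation is needed.
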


\begin{corollary}\label{cor:pq}
Let $p,q\ge2$ be multiplicatively independent and let $1\le s\le\infty$.
There is an open dense set
$\mathcal G_{p,q,s}\subset\Bcal_s$ such that, for every
$f\in\mathcal G_{p,q,s}$, every measure that maximizes or minimizes
$\int f\,d\mu$ among the measures invariant under both $T_p$ and $T_q$ has
zero entropy for every $T_{p^a q^b}$ with $a,b\in\Z_{\ge0}$ and $a+b>0$.
\end{corollary}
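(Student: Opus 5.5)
The plan is to deduce the corollary directly from Theorem~\ref{thm:main} by specializing to the two-element generating set $F=\{p,q\}$, and to set $\mathcal G_{p,q,s}=\mathcal G_{F,s}$. Three things need checking: that $F$ meets the hypotheses of the theorem, that $\Mcal_F$ is exactly the class of measures in the corollary, and that every exponent pair allowed in the corollary gives an element of $\Scal_F\setminus\{1\}$.

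First, $F$ is a nonempty finite subset of $\N_{\ge2}$, since $p,q\ge2$. It has two elements because $p\ne q$; indeed $p=q$ would give $p^1=q^1$, contradicting multiplicative independence. To see that $\Scal_F$ is nonlacunary, I argue by contradiction. Suppose $\Scal_F\subset\{d^k:k\in\Z_{\ge0}\}$ for some $d\ge2$. Then $p=d^a$ and $q=d^b$ with $a,b\ge1$, since $p,q>1$. Hence $p^b=d^{ab}=q^a$ is a nontrivial multiplicative relation, contradicting the assumption on $p,q$. So Theorem~\ref{thm:main} applies and produces an open dense set $\mathcal G_{F,s}\subset\Bcal_s$.

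Second, by definition
$$
  \Mcal_F=\{\mu\in\Pcal(\T):(T_p)_*\mu=\mu,\ (T_q)_*\mu=\mu\},
$$
which is precisely the set of measures invariant under both $T_p$ and $T_q$. Therefore, for every $f$, the maximizing and minimizing measures in the corollary are exactly $\Mcal_F^{\max}(f)\cup\Mcal_F^{\min}(f)$.

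Third, let $a,b\in\Z_{\ge0}$ with $a+b>0$ and put $r=p^aq^b$. Then $r\in\Scal_F$ by the definition of $\Scal_F$, and $r\ge2^{a+b}\ge2$, so $r\ne1$. Theorem~\ref{thm:main} then gives $h_\mu(T_r)=0$ for every $f\in\mathcal G_{F,s}$ and every optimizing $\mu$.

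There is no real obstacle. The only nontrivial point is the nonlacunarity check in the first step, and it reduces to the elementary observation that two powers of a common base $d$ always satisfy a multiplicative relation. In particular, no uniqueness of the optimizing measure is needed, since the theorem already applies to every element of the optimizing faces.
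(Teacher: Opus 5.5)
Your proposal is correct and matches the paper's proof: specialize Theorem~\ref{thm:main} to $F=\{p,q\}$ and take $\mathcal G_{p,q,s}=\mathcal G_{\{p,q\},s}$. You also spell out the nonlacunarity check ($p=d^a$, $q=d^b$ would force $p^b=q^a$), the identification of $\Mcal_F$ with the measures invariant under both $T_p$ and $T_q$, and the bound $p^aq^b\ge2$, all of which the paper leaves implicit.
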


\begin{corollary}\label{cor:simultaneous}
For every fixed $1\le s\le\infty$,
there is a dense $G_\delta$ set
$\mathcal G_{*,s}\subset\Bcal_s$ on which the conclusion of
Theorem~\ref{thm:main} holds simultaneously for every finitely generated
nonlacunary multiplicative semigroup of positive integers.
In particular, the conclusion of Corollary~\ref{cor:pq} holds simultaneously
for every multiplicatively independent integer pair, and hence for every
coprime pair.
\end{corollary}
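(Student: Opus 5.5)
The statement to prove is Corollary~\ref{cor:simultaneous}. The plan is to intersect countably many open dense sets from Theorem~\ref{thm:main} and apply the Baire category theorem in $\Bcal_s$. The space $\Bcal_s=W^{1,s}(\T)$ is a Banach space for every $1\le s\le\infty$, so it is a Baire space. Also, a finitely generated multiplicative semigroup of positive integers is determined by a finite generating set. Since it is a set of positive integers, we may discard the generator $1$ without changing the semigroup. A nonlacunary semigroup is not contained in $\{1\}=\{d^0\}$, so it is $\Scal_F$ for some nonempty finite $F\subset\N_{\ge2}$. The family of nonempty finite subsets of $\N_{\ge2}$ is countable, and so is the subfamily $\mathcal F$ of those $F$ with $\Scal_F$ nonlacunary.

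For each $F\in\mathcal F$, Theorem~\ref{thm:main} gives an open dense set $\mathcal G_{F,s}\subset\Bcal_s$. Set
$$
  \mathcal G_{*,s}=\bigcap_{F\in\mathcal F}\mathcal G_{F,s}.
$$
This is a countable intersection of open dense sets, so it is a $G_\delta$ set, and it is dense by Baire's theorem. Now take any finitely generated nonlacunary semigroup $\Sigma$ and write $\Sigma=\Scal_F$ with $F\in\mathcal F$. Then $\Mcal_F$ is exactly the set of $\Sigma$-invariant probability measures, so the optimizing sets $\Mcal_F^{\max}(f)$ and $\Mcal_F^{\min}(f)$ depend only on $\Sigma$. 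Different generating sets of the same $\Sigma$ give the same conclusion, so there is no ambiguity. Hence for every $f\in\mathcal G_{*,s}$ the conclusion of Theorem~\ref{thm:main} holds for $\Sigma$.

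For the final sentence of the corollary, let $p,q\ge2$ be multiplicatively independent. Then $F=\{p,q\}$ generates a nonlacunary semigroup: if $p=d^a$ and $q=d^b$, then $p^b=q^a$, which contradicts independence. So $F\in\mathcal F$, and the conclusion of Corollary~\ref{cor:pq} holds on $\mathcal G_{*,s}$, because the $T_p,T_q$-invariant measures are exactly $\Mcal_F$.

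For coprime $p,q\ge2$, suppose $p^a=q^b$ with $(a,b)\ne(0,0)$. Since $p,q\ge2$, both $a$ and $b$ must be positive. Then any prime dividing $p$ also divides $q$, which contradicts coprimality. So coprime pairs are multiplicatively independent and are covered.

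There is no real obstacle here. The only points needing care are the countability of the index family and the reduction to generating sets inside $\N_{\ge2}$.
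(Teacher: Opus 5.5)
Your proposal is correct and matches the paper's proof: intersect the countably many open dense sets $\mathcal G_{F,s}$ over finite $F\subset\N_{\ge2}$ with $\Scal_F$ nonlacunary, and apply Baire's theorem in the Banach space $\Bcal_s$. You also check several points the paper leaves implicit: discarding the generator $1$, independence of the result from the choice of generating set, that multiplicative independence implies nonlacunarity, and that coprime pairs are multiplicatively independent.
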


\begin{remark}[Generic uniqueness]\label{rem:unique}
For fixed $1\le s\le\infty$ and $F$, the generic uniqueness theorem for
topological semigroup actions~\cite[Theorem~1]{JenkinsonLiLiaoZhang},
applied to $r\cdot x=T_r(x)$, shows that potentials with a unique maximizing
measure form a dense $G_\delta$ subset of $\Bcal_s$. Intersecting this set
with its image under $f\mapsto-f$ and with $\mathcal G_{F,s}$ yields unique
maximizing and minimizing measures, both with zero entropy. Taking a countable
intersection over $F$ gives the simultaneous version. Theorem~\ref{thm:main}
itself does not require uniqueness. For a related prevalent result, see
Morris~\cite{Morris2021}.
\end{remark}

\subsection*{Context and proof strategy}

For a single $C^2$ expanding circle map, Morris~\cite[Theorem~2]{Morris2008}
showed that a generic Lipschitz potential admits a unique maximizing measure
of zero entropy. Gao and Shen~\cite{GaoShen} proved a similar statement for
real-analytic maps and potentials, provided the potential is not cohomologous
to a constant. Here we optimize over measures that are jointly invariant
under a nonlacunary multiplicative semigroup, and the zero-entropy conclusion
remains valid throughout the full $W^{1,s}$ scale, $1\le s\le\infty$.

Constrained ergodic optimization has also been studied by Motonaga and
Shinoda~\cite{MotonagaShinoda} and, in a general linear-constraint framework,
by Guo and McGoff~\cite{GuoMcGoff}. Joint invariance can be expressed by the
infinite family of constraints
$\int(g-g\circ T_n)\,d\mu=0$, with $g\in C(\T)$ and $n\in F$. Generic
uniqueness within this constrained class does not by itself imply an entropy
conclusion, because the unique optimizer may still be Haar measure.

A key ingredient in the proof is showing that Haar measure is generically not an optimizer.
 The grid measure $\nu_m=m^{-1}\sum_{j=0}^{m-1}\delta_{j/m}$
belongs to $\Mcal_F$ whenever $\gcd(m,N_F)=1$, and Section~\ref{sec:grid}
proves that $\int f\,d\nu_m-\int f\,d\lambda=o(m^{-1})$ for every fixed
$f\in W^{1,1}(\T)$. In contrast,
$u_m(x)=\cos(2\pi mx)/(2\pi m)$ has uniformly bounded $W^{1,s}$ norm and
separates $\nu_m$ from $\lambda$ by exactly $(2\pi m)^{-1}$. Consequently,
the sets of potentials for which $\lambda$ is a maximizer or a minimizer are
closed and nowhere dense. For potentials outside these exceptional sets, the
extreme points of the optimizing faces are semigroup-ergodic and distinct
from $\lambda$, so Rudolph--Johnson rigidity implies zero entropy at every
extreme optimizer. Choquet representation and the strong affinity of entropy
then extend this conclusion to all optimizers. The uniform-topology analogue
is simpler because it requires no quantitative rate for the Riemann sums; see
Remark~\ref{rem:continuous}.

\section{A periodic-grid estimate and its sharpness}\label{sec:grid}

For $m\ge1$, put
$$
  \nu_m:=\frac1m\sum_{j=0}^{m-1}\delta_{j/m}.
$$
The next lemma is the elementary core of the argument. For each fixed periodic
$W^{1,1}$ function, it improves the standard $O(m^{-1})$ error bound for
equidistributed Riemann sums to $o(m^{-1})$. We give the proof because this
improvement is precisely what makes the bounded Sobolev perturbation in
Section~\ref{sec:exclude} possible. For a related dynamical
approach to numerical integration using equidistributed measures, see
Jenkinson and Pollicott~\cite{JenkinsonPollicott}.

\begin{lemma}\label{lem:grid}
If $\gcd(m,N_F)=1$, then $\nu_m\in\Mcal_F$.
Moreover, for every $f\in W^{1,1}(\T)$,
$$
  m\left(\int f\,d\nu_m-\int f\,d\lambda\right)
  \longrightarrow0
  \qquad(m\to\infty).
$$
\end{lemma}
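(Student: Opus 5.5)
The first claim is arithmetic. If $\gcd(m,N_F)=1$, then each $n\in F$ is coprime to $m$. Hence $j\mapsto nj \bmod m$ is a bijection of $\Z/m\Z$, and $T_n(j/m)=(nj \bmod m)/m$. So $T_n$ permutes the grid $\{j/m:0\le j<m\}$, which gives $(T_n)_*\nu_m=\nu_m$ for every $n\in F$, i.e. $\nu_m\in\Mcal_F$.

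For the asymptotic statement I will prove a uniform $O(m^{-1})$ bound with an explicit constant and then use density.

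\textbf{Step 1 (a priori bound).} For absolutely continuous periodic $g$ and $x\in[j/m,(j+1)/m]$,
$$
  |g(j/m)-g(x)|\le\int_{j/m}^{(j+1)/m}|g'|.
$$
Averaging over $x$ in the cell, multiplying by $1/m$, and summing over $j$ gives
$$
  \Bigl|\int g\,d\nu_m-\int g\,d\lambda\Bigr|\le \frac1m\|g'\|_{L^1}.
$$
That is, $m\,|\int g\,d\nu_m-\int g\,d\lambda|\le\|g'\|_{L^1}$, uniformly in $m$.

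\textbf{Step 2 (smooth functions).} For a trigonometric polynomial $p=\sum_{|k|\le K}c_k e^{2\pi i k x}$,
$$
  \int e^{2\pi i kx}\,d\nu_m=\begin{cases}1, & m\mid k,\\ 0, & \text{otherwise}.\end{cases}
$$
So once $m>K$, only $k=0$ survives and $\int p\,d\nu_m=c_0=\int p\,d\lambda$. Thus the error is exactly $0$ for large $m$. Alternatively, for $C^2$ functions one can use the midpoint/trapezoid error, which is $O(m^{-2})$.

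\textbf{Step 3 (density).} Trigonometric polynomials are dense in $W^{1,1}(\T)$: Fejér means $\sigma_K f$ satisfy $(\sigma_K f)'=\sigma_K(f')\to f'$ in $L^1$, and also $\sigma_K f\to f$ in $L^1$. Given $\varepsilon>0$, choose $p$ with $\|f'-p'\|_{L^1}<\varepsilon$. Write $E_m(g)=m(\int g\,d\nu_m-\int g\,d\lambda)$, which is linear in $g$. Then
$$
  |E_m(f)|\le|E_m(f-p)|+|E_m(p)|\le\varepsilon+0
$$
for $m$ large, and hence $\limsup_m|E_m(f)|\le\varepsilon$.

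The only genuinely delicate point is Step 1. It must bound the error by the $L^1$ norm of the derivative alone, not by $\|f\|_{L^\infty}$ or a Lipschitz constant, so that the $\varepsilon/3$-type argument closes in the $W^{1,1}$ topology. The pointwise fundamental-theorem bound on each cell does exactly this, and it requires the continuous representative that the paper has fixed. This argument uses only $m\to\infty$ and does not need $\gcd(m,N_F)=1$; coprimality enters only in the invariance statement.
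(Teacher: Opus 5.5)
Your proof is correct, and it takes a genuinely different route from the paper's. The paper first derives the exact Peano-kernel identity $mQ_m(f)=\int_0^1(\{mx\}-\tfrac12)f'(x)\,dx$ (Fubini on each cell, plus $\int_0^1 f'=0$). It then proves a Riemann--Lebesgue-type statement $\int_0^1 g(x)s(mx)\,dx\to0$ for $g\in L^1$. For continuous $g$, the sawtooth $s(mx)$ has mean zero on each cell, which gives the bound $\tfrac12\omega_g(1/m)$; general $g=f'$ is handled by $L^1$-approximation together with the uniform bound $\|s\|_\infty\le\tfrac12$. You instead work with the functional $E_m$ directly on the function side. Your Step~1 gives the uniform bound $|E_m(g)|\le\|g'\|_{L^1}$; the paper's identity yields the slightly better constant $\tfrac12$, but that is irrelevant here. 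Your Step~2 uses discrete orthogonality of characters on the grid to get exact quadrature for trigonometric polynomials of degree less than $m$. Your Step~3 uses Fej\'er means, which commute with differentiation, to get density in the $W^{1,1}$ seminorm. All three steps are valid. Both arguments follow the same Banach--Steinhaus template (uniformly bounded functionals that converge on a dense class), but they choose different dense classes. The paper's version needs nothing beyond density of continuous functions in $L^1$ and produces an explicit error formula. Yours uses a little Fourier analysis and makes visible that the quadrature error comes only from the frequencies in $m\Z\setminus\{0\}$ aliasing onto the constant mode. That is exactly the mechanism the later perturbation $u_m=\cos(2\pi mx)/(2\pi m)$ exploits.
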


\begin{proof}
If $\gcd(m,N_F)=1$, multiplication by every $n\in F$ permutes the residue
classes modulo $m$.
Hence $\nu_m\in\Mcal_F$.

View $f$ as a $1$-periodic function on $\mathbb R$. It is absolutely
continuous, $f'\in L^1(0,1)$, and $f(1)=f(0)$. Put
$$
  Q_m(f):=\frac1m\sum_{j=0}^{m-1}f(j/m)-\int_0^1 f(x)\,dx.
$$
For $a=j/m$ and $b=(j+1)/m$, Fubini's theorem gives
$$
\begin{aligned}
  \frac1m f(a)-\int_a^b f(x)\,dx
  &=\int_a^b\bigl(f(a)-f(x)\bigr)\,dx\\
  &=-\int_a^b\left(\int_a^x f'(t)\,dt\right)dx\\
  &=-\int_a^b(b-t)f'(t)\,dt.
\end{aligned}
$$
Summing over $j$ and multiplying by $m$, we obtain
$$
\begin{aligned}
  mQ_m(f)
  &=-\sum_{j=0}^{m-1}\int_{j/m}^{(j+1)/m}
       (j+1-mx)f'(x)\,dx\\
  &=\int_0^1(\{mx\}-1)f'(x)\,dx\\
  &=\int_0^1\left(\{mx\}-\frac12\right)f'(x)\,dx,
\end{aligned}
$$
where the last equality uses
$$
  \int_0^1f'(x)\,dx=f(1)-f(0)=0.
$$
Let $s(x)=\{x\}-1/2$, extended periodically. If $g$ is continuous and
$I_j=[j/m,(j+1)/m]$, then $s(mx)$ has mean zero on $I_j$. Thus
$$
\begin{aligned}
  \left|\int_0^1g(x)s(mx)\,dx\right|
  &=\left|\sum_{j=0}^{m-1}\int_{I_j}
       \bigl(g(x)-g(j/m)\bigr)s(mx)\,dx\right|\\
  &\le\frac12\omega_g(1/m)\longrightarrow0.
\end{aligned}
$$
For $g\in L^1(0,1)$, choose a continuous $g_0$ with $\|g-g_0\|_1<\varepsilon$.
Since $\|s\|_\infty\le1/2$,
$$
  \left|\int_0^1 g(x)s(mx)\,dx\right|
  \le
  \left|\int_0^1 g_0(x)s(mx)\,dx\right|+\frac\varepsilon2.
$$
Taking first $m\to\infty$ and then $\varepsilon\downarrow0$ gives
$$
  \int_0^1 g(x)s(mx)\,dx\longrightarrow0.
$$
Taking $g=f'$, we complete the proof.
\end{proof}

\begin{remark}[Sharpness of the little-$o$ estimate]
Recall
$$
  Q_m(f)=\int f\,d\nu_m-\int f\,d\lambda.
$$
Although Lemma~\ref{lem:grid} gives $Q_m(f)=o(m^{-1})$ for each fixed $f$,
there is no prescribed remainder rate of the form $O(\rho_m/m)$ that is valid
for every $W^{1,s}$ function, for any fixed $1\le s\le\infty$. Here we
set
$$
  I_F:=\{m\ge1:m\equiv1\pmod{N_F}\},
$$
let $(\rho_m)_{m\in I_F}$ be any positive sequence tending to zero, and
equip $C^1(\T)$ with the norm
$$
  \|f\|_{C^1}=\|f\|_\infty+\|f'\|_\infty.
$$
For $m\in I_F$, define the continuous linear functional
$$
  L_m(f):=\frac{mQ_m(f)}{\rho_m}
$$
on $C^1(\T)$. For
$$
  u_m(x)=\frac{\cos(2\pi mx)}{2\pi m},
$$
we have
$$
  \|u_m\|_{C^1}\le C:=1+\frac1{2\pi}
$$
and, by direct calculation,
$$
  Q_m(u_m)=\frac1{2\pi m},
  \qquad
  mQ_m(u_m)=\frac1{2\pi}.
$$
Thus
$$
  \|L_m\|
  \ge\frac{|L_m(u_m)|}{\|u_m\|_{C^1}}
  \ge\frac1{2\pi C\rho_m}
  \longrightarrow\infty
  \qquad(m\to\infty,\ m\in I_F).
$$
This does not contradict Lemma~\ref{lem:grid}, because the test function $u_m$
varies with $m$, whereas the lemma concerns each fixed function.

By the uniform boundedness principle, there is a fixed
$f\in C^1(\T)$ such
that
$$
  \sup_{m\in I_F}|L_m(f)|=\infty.
$$
Passing to a subsequence, we obtain
$m_j\in I_F$ such that
$$
  \frac{m_j Q_{m_j}(f)}{\rho_{m_j}}
  \longrightarrow\infty\quad(\text{or }-\infty).
$$
On the other hand, Lemma~\ref{lem:grid} gives
$m_j Q_{m_j}(f)\to0$.
It follows that, for every positive sequence $(\rho_m)_{m\in I_F}$ tending to
zero, there is a fixed $f\in C^1(\T)$ for which the estimate
$$
  Q_m(f)=O_f(\rho_m/m)
  \qquad(m\to\infty,\ m\in I_F)
$$
fails. Since $C^1(\T)\subset W^{1,s}(\T)$ for every $1\le s\le\infty$,
this proves the asserted absence of a prescribed remainder rate throughout
the Sobolev scale. In particular, for every $\varepsilon>0$, taking
$\rho_m=m^{-\varepsilon}$ gives an
$f_\varepsilon\in C^1(\T)$ for which
$$
  Q_m(f_\varepsilon)
  =O_{f_\varepsilon}(m^{-1-\varepsilon})
$$
fails along $m\in I_F$.
\end{remark}

\section{Excluding Haar measure}\label{sec:exclude}

Fix $1\le s\le\infty$, and set
$$
\begin{aligned}
  \mathcal A^{\max}_{\lambda,F,s}
  &:=\{f\in\Bcal_s:\lambda\in\Mcal_F^{\max}(f)\},\\
  \mathcal A^{\min}_{\lambda,F,s}
  &:=\{f\in\Bcal_s:\lambda\in\Mcal_F^{\min}(f)\}.
\end{aligned}
$$

\begin{proposition}\label{prop:lambda-exclusion}
The sets $\mathcal A^{\max}_{\lambda,F,s}$ and
$\mathcal A^{\min}_{\lambda,F,s}$ are closed and nowhere dense in
$\Bcal_s$. Hence
$$
  \mathcal G_{F,s}
  :=\Bcal_s\setminus
  \bigl(\mathcal A^{\max}_{\lambda,F,s}\cup
        \mathcal A^{\min}_{\lambda,F,s}\bigr)
$$
is open and dense.
\end{proposition}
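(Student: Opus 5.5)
I will treat $\mathcal A^{\max}_{\lambda,F,s}$ in detail; the minimizing case follows because $f\in\mathcal A^{\min}_{\lambda,F,s}$ if and only if $-f\in\mathcal A^{\max}_{\lambda,F,s}$, and $f\mapsto -f$ is an isometry of $\Bcal_s$. Once both sets are known to be closed and nowhere dense, their union is closed and nowhere dense, and its complement $\mathcal G_{F,s}$ is open and dense.

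\emph{Closedness.} First I rewrite the set as
$$
  \mathcal A^{\max}_{\lambda,F,s}=\Bigl\{f\in\Bcal_s:\ \int f\,d\lambda\ge\int f\,d\mu\ \text{for all }\mu\in\Mcal_F\Bigr\}.
$$
For each fixed $\mu$, the map $f\mapsto\int f\,d\lambda-\int f\,d\mu$ is continuous on $\Bcal_s$. This holds because the embedding $\Bcal_s\hookrightarrow C(\T)$ is continuous, which gives $\|f\|_\infty\le C_s\|f\|_{\Bcal_s}$. The set is therefore an intersection of closed half-spaces, hence closed. Equivalently, $\beta_F$ is $1$-Lipschitz in the sup norm, so $f\mapsto\beta_F(f)-\int f\,d\lambda$ is continuous and the set is its zero set.

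\emph{Empty interior.} Since the set is closed, it is enough to show that for every $f\in\mathcal A^{\max}_{\lambda,F,s}$ and every $\varepsilon>0$ there is $g\notin\mathcal A^{\max}_{\lambda,F,s}$ with $\|g-f\|_{\Bcal_s}<\varepsilon$.

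\begin{itemize}
\item Take $m\in I_F$, that is, $m\equiv1\pmod{N_F}$. Then $\gcd(m,N_F)=1$, so $\nu_m\in\Mcal_F$ by Lemma~\ref{lem:grid}.
\item Set $g=f+\delta u_m$ with $u_m(x)=\cos(2\pi mx)/(2\pi m)$ and $\delta>0$ small.
\item Since $u_m'(x)=-\sin(2\pi mx)$, we have $\|u_m\|_{L^s}+\|u_m'\|_{L^s}\le 1+1/(2\pi)$ uniformly in $m$ and $s$. Choosing $\delta$ with $\delta(1+1/(2\pi))<\varepsilon$ gives $\|g-f\|_{\Bcal_s}<\varepsilon$ for every $m$.
\item From $\int u_m\,d\lambda=0$ and $\int u_m\,d\nu_m=1/(2\pi m)$ we obtain
$$
  m\Bigl(\int g\,d\nu_m-\int g\,d\lambda\Bigr)=mQ_m(f)+\frac{\delta}{2\pi}.
$$
\item By Lemma~\ref{lem:grid}, applicable since $f\in W^{1,s}\subset W^{1,1}$, we have $mQ_m(f)\to0$. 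So for $m\in I_F$ large the right-hand side is positive, giving $\int g\,d\nu_m>\int g\,d\lambda$.
\end{itemize}

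Hence $\lambda\notin\Mcal_F^{\max}(g)$, and the set has empty interior.

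The main obstacle is balancing the size of the perturbation against the gap it creates. The perturbation must stay bounded in $\Bcal_s$ independently of $m$, while the gap $\delta/(2\pi m)$ it produces must still dominate the Riemann-sum error $Q_m(f)$. The standard $O(m^{-1})$ bound would not suffice, and this is exactly where the $o(m^{-1})$ conclusion of Lemma~\ref{lem:grid} is used.
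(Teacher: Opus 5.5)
Your proposal is correct and matches the paper's proof: closedness comes from the sup-norm Lipschitz bound on $\beta_F$ (your half-space description is an equivalent view), and empty interior comes from the uniformly bounded perturbation $f+\delta u_m$ with $m\equiv1\pmod{N_F}$, whose gap $\delta/(2\pi m)$ beats the $o(m^{-1})$ Riemann-sum error from Lemma~\ref{lem:grid}. The only cosmetic difference is that you get the minimizing case from the isometry $f\mapsto-f$, whereas the paper repeats the construction with $f-\delta u_m$.
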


\begin{proof}
The estimates
$$
  |\beta_F(f)-\beta_F(g)|\le\|f-g\|_\infty,
  \qquad
  |\alpha_F(f)-\alpha_F(g)|\le\|f-g\|_\infty
$$
show that both
$$
  \mathcal A^{\max}_{\lambda,F,s}
  =\left\{f\in\Bcal_s:\beta_F(f)=\int f\,d\lambda\right\}
$$
and
$$
  \mathcal A^{\min}_{\lambda,F,s}
  =\left\{f\in\Bcal_s:\alpha_F(f)=\int f\,d\lambda\right\}
$$
are closed in $\Bcal_s$.

Let $f\in\mathcal A^{\max}_{\lambda,F,s}$ and let
$\varepsilon>0$.
For $k\ge1$, set
$$
  m_k=kN_F+1,
  \qquad
  \mu_k=\nu_{m_k},
  \qquad
  u_k(x)=\frac{\cos(2\pi m_kx)}{2\pi m_k}.
$$
Since $\gcd(m_k,N_F)=1$, Lemma~\ref{lem:grid} gives
$\mu_k\in\Mcal_F$. Moreover,
$$
  C_s:=\sup_{k\ge1}\|u_k\|_{\Bcal_s}
  \le1+\frac1{2\pi}<\infty,
$$
and, because $u_k$ is maximized exactly at the grid points,
$$
  a_k:=\int u_k\,d\mu_k-\int u_k\,d\lambda
  =\frac1{2\pi m_k}>0.
$$
Choose $\delta>0$ so that $\delta C_s<\varepsilon$. Since
$\Bcal_s\subset W^{1,1}(\T)$,
Lemma~\ref{lem:grid} gives
$$
  Q_k:=\int f\,d\mu_k-\int f\,d\lambda=o(m_k^{-1}).
$$
Thus $|Q_k|<\delta a_k$ for all sufficiently large $k$. For such a $k$,
the potential $g=f+\delta u_k$ satisfies
$$
  \|g-f\|_{\Bcal_s}<\varepsilon,
  \qquad
  \int g\,d\mu_k-\int g\,d\lambda=Q_k+\delta a_k>0.
$$
Hence $g\notin\mathcal A^{\max}_{\lambda,F,s}$. Thus the closed set
$\mathcal A^{\max}_{\lambda,F,s}$ has empty interior.

If $f\in\mathcal A^{\min}_{\lambda,F,s}$, the same construction with
$g=f-\delta u_k$ gives
$$
  \int g\,d\mu_k-\int g\,d\lambda=Q_k-\delta a_k<0.
$$
Thus $\mathcal A^{\min}_{\lambda,F,s}$ is also nowhere dense. Now we have proved that
$$
  \mathcal G_{F,s}
  :=\Bcal_s\setminus
  \bigl(\mathcal A^{\max}_{\lambda,F,s}\cup
        \mathcal A^{\min}_{\lambda,F,s}\bigr)
$$
is open and dense.
\end{proof}

\section{Entropy rigidity and conclusion}\label{sec:rigidity}

A measure $\eta\in\Mcal_F$ is \emph{$\Scal_F$-ergodic} if every Borel set
$A\subset\T$ satisfying
$$
  \eta(T_n^{-1}A\mathbin\triangle A)=0
  \qquad(n\in F)
$$
has $\eta(A)\in\{0,1\}$.
The following well-known fact describes the relationship between semigroup ergodicity and the convex structure of $\Mcal_F$.

\begin{lemma}\label{lem:extreme}
A measure $\eta\in\Mcal_F$ is $\Scal_F$-ergodic if and only if it is an
extreme point of $\Mcal_F$.
\end{lemma}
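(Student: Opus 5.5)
I will prove both implications directly, using only the definition of $\Scal_F$-ergodicity and two standard facts: Radon--Nikodym derivatives, and the fact that for a finite (hence countable) family of commuting measurable maps, almost-invariant sets can be replaced by strictly invariant ones.

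\emph{Ergodic $\Rightarrow$ extreme.} Suppose $\eta$ is $\Scal_F$-ergodic and $\eta=t\eta_1+(1-t)\eta_2$ with $\eta_i\in\Mcal_F$ and $0<t<1$. Then $\eta_1\le t^{-1}\eta$, so $\eta_1\ll\eta$; let $\phi=d\eta_1/d\eta$, a bounded nonnegative function. For each $n\in F$, invariance of both $\eta_1$ and $\eta$ under $T_n$ should give $\phi\circ T_n=\phi$ $\eta$-a.e. I would obtain this by the standard argument. Set $A=\{\phi>c\}$. Invariance gives $\int_{T_n^{-1}A}\phi\,d\eta=\eta_1(A)=\int_A\phi\,d\eta$, and $\eta(T_n^{-1}A)=\eta(A)$. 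Comparing $\phi$ on $A\setminus T_n^{-1}A$ (where $\phi>c$) with $\phi$ on $T_n^{-1}A\setminus A$ (where $\phi\le c$), and noting that these two sets have equal $\eta$-measure, forces $\eta(A\triangle T_n^{-1}A)=0$. This step needs a little care with the strict versus non-strict inequality, but it is the classical proof. By ergodicity each level set $\{\phi>c\}$ then has measure $0$ or $1$, so $\phi$ is constant $\eta$-a.e., hence $\phi\equiv1$ and $\eta_1=\eta$. Thus $\eta$ is extreme.

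\emph{Extreme $\Rightarrow$ ergodic.} Suppose $\eta$ is extreme but some Borel $A$ satisfies $\eta(T_n^{-1}A\triangle A)=0$ for all $n\in F$ with $0<\eta(A)<1$. The main obstacle is to upgrade $A$ to a set $B$ that is \emph{strictly} invariant, or at least satisfies $T_n^{-1}B=B$ modulo $\eta$ in a form that guarantees $\eta(\cdot\cap B)$ is $T_n$-invariant. I would take
$$B=\bigcap_{r\in\Scal_F}T_r^{-1}A.$$
This is a countable intersection. Each $T_r$ is a composition of the commuting maps $T_n$, $n\in F$, and each $T_n$ preserves $\eta$, so every $T_r^{-1}A$ differs from $A$ by an $\eta$-null set. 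Hence $\eta(B\triangle A)=0$. Moreover $T_n^{-1}B=\bigcap_{r}T_{nr}^{-1}A\supseteq B$, and the two sets have equal $\eta$-measure.

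In fact I only need $\eta(T_n^{-1}C\triangle C)=0$ for both $C=B$ and $C=A$. Then $\eta_A:=\eta(\cdot\cap A)/\eta(A)$ satisfies
$$\eta_A(T_n^{-1}E)=\frac{\eta(T_n^{-1}E\cap A)}{\eta(A)}=\frac{\eta\bigl(T_n^{-1}(E\cap A)\bigr)}{\eta(A)}=\eta_A(E),$$
where the middle equality uses $\eta(T_n^{-1}A\triangle A)=0$ and the last uses $T_n$-invariance of $\eta$. So the intersection construction above is not even needed. Similarly $\eta_{A^c}\in\Mcal_F$. Then
$$\eta=\eta(A)\,\eta_A+\bigl(1-\eta(A)\bigr)\,\eta_{A^c}$$
is a nontrivial convex decomposition with $\eta_A\ne\eta$, since $\eta_A(A^c)=0$ while $\eta(A^c)>0$. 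This contradicts extremality.

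The only delicate point is therefore the Radon--Nikodym invariance in the first direction, which I would handle by the level-set comparison described above.
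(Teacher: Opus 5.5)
Your proof is correct, and its skeleton matches the paper's. For non-ergodic $\Rightarrow$ non-extreme, both of you split $\eta$ into its normalized restrictions to $A$ and $A^c$. For ergodic $\Rightarrow$ extreme, both of you show that the density of $\eta_1$ with respect to $\eta$ is invariant, and then apply ergodicity to its level sets.

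The difference is in how that invariance is obtained. The paper works in $L^2(\eta)$. Invariance of $\eta_1$ means $P_nw=w$ for the adjoint $P_n=U_n^*$ of the Koopman isometry $U_n$. The equality case of Cauchy--Schwarz in $\langle U_nw,w\rangle=\|U_nw\|_2\|w\|_2$ then forces $U_nw=w$, after which the paper still has to pass to level sets.

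You argue directly with $A=\{\phi>c\}$. Your two identities give $\int_{A\setminus T_n^{-1}A}(\phi-c)\,d\eta=\int_{T_n^{-1}A\setminus A}(\phi-c)\,d\eta$. The left side is $\ge0$ and vanishes only if $A\setminus T_n^{-1}A$ is null, while the right side is $\le0$. Together with the equal-measure identity, both sets are null, which settles the strict/non-strict issue you flagged.

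Your route is purely measure-theoretic. It produces exactly the almost-invariant sets that appear in the paper's definition of $\Scal_F$-ergodicity. The paper's route is more compact once the identification $P_nw=w$ is granted.

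You also check explicitly, using $\eta(T_n^{-1}A\triangle A)=0$, that the normalized restrictions are $T_n$-invariant when $A$ is invariant only modulo $\eta$; the paper leaves this implicit. As you observe, the strictly invariant set $B$ is not needed.
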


\begin{proof}
If $\eta$ is not $\Scal_F$-ergodic, choose a jointly invariant set $A$ such that $0<\eta(A)<1$. The normalized restrictions of $\eta$ to $A$ and
$A^c$ belong to $\Mcal_F$ and give a nontrivial convex decomposition of
$\eta$. Thus $\eta$ is not extreme.

Conversely, suppose
$$
  \eta=t\eta_1+(1-t)\eta_2,
  \qquad 0<t<1,
  \qquad \eta_1,\eta_2\in\Mcal_F.
$$
Let $w=d\eta_1/d\eta\in L^\infty(\eta)$.
For $n\in F$, let $U_n\varphi=\varphi\circ T_n$ on $L^2(\eta)$ and let $P_n=U_n^*$.
Both $\eta$ and $\eta_1$ are $T_n$-invariant, so $P_nw=w$.
Since $U_n$ is an isometry,
$$
  \langle U_nw,w\rangle
  =\langle w,P_nw\rangle
  =\|w\|_2^2
  =\|U_nw\|_2\|w\|_2,
$$
and equality in Cauchy--Schwarz gives $U_nw=w$.
If $\eta$ is $\Scal_F$-ergodic, every jointly invariant $L^2(\eta)$ function
is constant, because all its level sets are jointly invariant modulo $\eta$.
Hence $w$ is constant, so $\eta_1=\eta$; similarly $\eta_2=\eta$.
Thus $\eta$ is extreme.
\end{proof}

\begin{theorem}[Rudolph--Johnson entropy rigidity]\label{thm:RJ}
Let $\Sigma\subset\N$ be a nonlacunary multiplicative semigroup and let
$\eta$ be a $\Sigma$-invariant, $\Sigma$-ergodic Borel probability measure on
$\T$.
Then either $\eta=\lambda$, or
$$
  h_\eta(T_r)=0
  \qquad(r\in\Sigma\setminus\{1\}).
$$
\end{theorem}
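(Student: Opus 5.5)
The plan is not to reprove rigidity. I would reduce the statement to the form in which it appears in the literature: Rudolph~\cite{Rudolph} for coprime $p,q$, and Johnson~\cite{Johnson} for general nonlacunary semigroups. That form reads: \emph{if $\Sigma$ is nonlacunary, $\eta$ is $\Sigma$-invariant and $\Sigma$-ergodic, and $h_\eta(T_r)>0$ for some $r\in\Sigma$, then $\eta=\lambda$.} Theorem~\ref{thm:RJ} is exactly the contrapositive. Suppose $\eta\neq\lambda$ and $h_\eta(T_r)>0$ for a single $r\in\Sigma\setminus\{1\}$. Johnson's theorem then forces $\eta=\lambda$, which is a contradiction. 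Hence $h_\eta(T_r)=0$ for every $r\ne1$. So the proof is mostly a matter of checking that the hypotheses match.

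The steps, in order, are these.

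\textbf{First}, I would note that Johnson's ergodicity hypothesis is ergodicity under the whole semigroup action. This is the notion used here, namely joint invariance of sets modulo $\eta$ (compare Lemma~\ref{lem:extreme}). No ergodicity of an individual $T_r$ is needed, and this is what allows the result to be applied to extreme points of $\Mcal_F$.

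\textbf{Second}, I would record the elementary fact that a nonlacunary semigroup contains two multiplicatively independent elements $p,q$. Indeed, if all elements were pairwise multiplicatively dependent, they would all be powers of a common base $d$, so $\Sigma$ would be lacunary. This matches the formulation in terms of $T_p,T_q$.

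\textbf{Third}, I would note a consistency check on the "all $r$" conclusion. For a $\Sigma$-invariant measure one has $h_\eta(T_{r^k})=k\,h_\eta(T_r)$. One also has the proportionality $h_\eta(T_a)/\log a=h_\eta(T_b)/\log b$ for commuting $a,b\in\Sigma$. Together these show that positivity for one $r$ is equivalent to positivity for all $r$. This is not strictly needed for the contrapositive argument, but it explains why the dichotomy is stated uniformly in $r$.

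The main obstacle, if one insists on a self-contained argument, is the rigidity itself. In Rudolph's approach one would take a $T_q$-invariant generating partition for $T_p$ and use the conditional measures on $p$-adic atoms. Positive entropy together with ergodicity then forces these conditional measures to be invariant under the translations generated by $q$-multiplication. This yields invariance of $\eta$ under a dense set of rotations, and therefore $\eta=\lambda$. Here I would simply cite~\cite{Rudolph,Johnson}. Beyond the matching of hypotheses above, the only thing to verify is that the semigroup-ergodicity convention in those sources agrees with the one defined before Lemma~\ref{lem:extreme}.
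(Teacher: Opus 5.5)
Your proposal is correct and matches the paper, which also gives no proof of the rigidity theorem. The paper cites it from Johnson's Theorem~B for general nonlacunary semigroups (see also Einsiedler--Fish, Theorem~2.2), and from Rudolph for the coprime two-generator case; the stated dichotomy is just the contrapositive of the positive-entropy formulation, as you say. Your extra remarks are not needed, though if you wanted to derive the general case from the two-generator $T_p,T_q$ theorem you would need an additional step, because $\Sigma$-ergodicity does not imply ergodicity under the subsemigroup generated by $p$ and $q$ --- which is why citing Johnson's semigroup version directly is the right move.
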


This result is due to Johnson~\cite[Theorem~B]{Johnson}; see also
\cite[Definition~2.1 and Theorem~2.2]{EinsiedlerFish}.
The coprime two-generator case is due to Rudolph~\cite{Rudolph}.

\begin{lemma}\label{lem:entropy-affine}
Fix $r\ge2$, and let $\Mcal(T_r)$ denote the set of $T_r$-invariant Borel
probability measures on $\T$. The map
$\eta\mapsto h_\eta(T_r)$ is bounded and Borel on $\Mcal(T_r)$. If
$\kappa$ is a Borel probability measure on $\Mcal(T_r)$ with barycenter
$\mu$, then
$$
  h_\mu(T_r)=\int_{\Mcal(T_r)}h_\eta(T_r)\,d\kappa(\eta).
$$
\end{lemma}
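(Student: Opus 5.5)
The plan is to prove Lemma~\ref{lem:entropy-affine} by reducing it to the classical fact that Kolmogorov--Sinai entropy is affine and, more strongly, harmonic with respect to ergodic decompositions. I will exploit a finite generating partition together with upper semicontinuity.

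\textbf{Boundedness.} For $r\ge2$, let $\xi_r=\{[j/r,(j+1)/r):0\le j<r\}$. Its refinements $\bigvee_{i=0}^{k-1}T_r^{-i}\xi_r$ are the $r$-adic intervals of length $r^{-k}$, whose diameters tend to zero. Hence $\xi_r$ is a one-sided generator for every $\eta\in\Mcal(T_r)$. The Kolmogorov--Sinai theorem then gives
$$
h_\eta(T_r)=h_\eta(T_r,\xi_r)=\inf_k\frac1k H_\eta\Bigl(\bigvee_{i=0}^{k-1}T_r^{-i}\xi_r\Bigr)\le\log r.
$$
This proves boundedness.

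\textbf{Borel measurability.} The map $\eta\mapsto\eta(I)$ is Borel for each interval $I$. It is upper semicontinuous for closed sets and lower semicontinuous for open sets, and in any case it is Borel on $\Pcal(\T)$. Therefore each $H_\eta(\bigvee_{i<k}T_r^{-i}\xi_r)$ is a finite combination of Borel functions of $\eta$. Taking the infimum over $k$ (equivalently, a limit of a sequence) gives a Borel function. One could alternatively obtain upper semicontinuity directly, because $T_r$ is expansive with expansivity constant $<1/(2r)$. I will only need Borel measurability, so I will avoid the boundary-mass subtlety.

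\textbf{The integral formula.} I will first establish affinity for finite convex combinations, using
$$
tH_{\eta_1}(\mathcal P)+(1-t)H_{\eta_2}(\mathcal P)\le H_{t\eta_1+(1-t)\eta_2}(\mathcal P)\le tH_{\eta_1}(\mathcal P)+(1-t)H_{\eta_2}(\mathcal P)+\log 2.
$$
Here $\mathcal P=\bigvee_{i<k}T_r^{-i}\xi_r$. Dividing by $k$ and letting $k\to\infty$ gives $h_{t\eta_1+(1-t)\eta_2}=th_{\eta_1}+(1-t)h_{\eta_2}$.

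For a general $\kappa$, the cleanest route is the theorem of Jacobs on the ergodic decomposition. Let $\mu=\int\eta\,d\kappa(\eta)$ and disintegrate each $\eta$ into $T_r$-ergodic components $\eta=\int\tau\,d\pi_\eta(\tau)$. Then $\mu=\int\tau\,d(\int\pi_\eta\,d\kappa)(\tau)$. By uniqueness of the ergodic decomposition, the measure $\int\pi_\eta\,d\kappa$ is the ergodic decomposition of $\mu$. This requires the map $\eta\mapsto\pi_\eta$ to be measurable, and it is standard, since $\pi_\eta$ is the pushforward of $\eta$ under a fixed Borel map $x\mapsto\tau_x$ defined on a full-measure set for every invariant measure. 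Jacobs' formula $h_\mu=\int h_\tau\,d\pi_\mu(\tau)$ then gives
$$
h_\mu=\int\!\!\int h_\tau\,d\pi_\eta\,d\kappa=\int h_\eta\,d\kappa.
$$

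\textbf{Main obstacle.} The expected difficulty is making this composition of decompositions rigorous. Here I would use the fixed Borel "ergodic component map" $x\mapsto\tau_x$, which is valid simultaneously for all invariant measures. Then $\pi_\eta=(\tau)_*\eta$ depends linearly on $\eta$, and the identity reduces to Fubini applied to the bounded Borel function $\tau\mapsto h_\tau$.
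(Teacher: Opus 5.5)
Your proposal is correct and follows essentially the same route as the paper: mix the Jacobs ergodic-decomposition kernels against $\kappa$, identify the mixture with the ergodic decomposition of $\mu$ (by uniqueness, or directly through your fixed component map $x\mapsto\tau_x$), and conclude with Fubini and Jacobs' formula. Your generator-based arguments for boundedness and Borel measurability are a valid self-contained replacement for the paper's appeal to Jacobs, and the finite-convex-combination step is correct but not needed.
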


\begin{proof}
Let $\xi_\eta$ be the ergodic decomposition of
$\eta\in\Mcal(T_r)$. Jacobs' theorem~\cite{Jacobs} provides the asserted
Borel measurability, a Borel ergodic-decomposition kernel, and the formula
$$
  h_\eta(T_r)=\int h_\theta(T_r)\,d\xi_\eta(\theta).
$$
Define a probability measure $\rho$ on the $T_r$-ergodic measures by
$$
  \rho(E)=\int_{\Mcal(T_r)}\xi_\eta(E)\,d\kappa(\eta).
$$
The barycenter of $\rho$ is $\mu$, so uniqueness of the ergodic
decomposition gives $\rho=\xi_\mu$. Since
$0\le h_\theta(T_r)\le\log r$, Fubini's theorem and Jacobs' formula yield
$$
\begin{aligned}
  \int_{\Mcal(T_r)}h_\eta(T_r)\,d\kappa(\eta)
  &=\int h_\theta(T_r)\,d\rho(\theta)\\
  &=\int h_\theta(T_r)\,d\xi_\mu(\theta)
   =h_\mu(T_r).
\end{aligned}
$$
\end{proof}

\begin{proof}[Proof of Theorem~\ref{thm:main}]
Fix $1\le s\le\infty$ and let $f\in\mathcal G_{F,s}=\Bcal_s\setminus
  \bigl(\mathcal A^{\max}_{\lambda,F,s}\cup
        \mathcal A^{\min}_{\lambda,F,s}\bigr)$.
Suppose first that $\mu\in\Mcal_F^{\max}(f)$. By the Choquet
representation theorem~\cite[Chapter~3]{Phelps}, there is a Borel
probability measure $\kappa$ carried by $\ext(\Mcal_F)$ whose barycenter
is $\mu$. Put $\beta=\beta_F(f)$. Since every $\eta\in\Mcal_F$ satisfies
$\int f\,d\eta\le\beta$, we have
$$
  0
  =\beta-\int f\,d\mu
  =\int_{\ext(\Mcal_F)}
     \left(\beta-\int f\,d\eta\right)d\kappa(\eta).
$$
The integrand is nonnegative and therefore vanishes for $\kappa$-almost every
$\eta$. Hence $\kappa$-almost every $\eta$ belongs to
$\Mcal_F^{\max}(f)$. Proposition~\ref{prop:lambda-exclusion} gives
$\lambda\notin\Mcal_F^{\max}(f)$, so none of these extreme optimizers is
$\lambda$. By Lemma~\ref{lem:extreme} and Theorem~\ref{thm:RJ}, for
$\kappa$-almost every $\eta$,
$$
  h_\eta(T_r)=0
  \qquad(r\in\Scal_F\setminus\{1\}).
$$
For each fixed $r$, the strong affinity of entropy
(Lemma~\ref{lem:entropy-affine}) therefore gives
$$
  h_\mu(T_r)
  =\int_{\ext(\Mcal_F)}h_\eta(T_r)\,d\kappa(\eta)
  =0.
$$
If $\mu\in\Mcal_F^{\min}(f)$, then
$\mu\in\Mcal_F^{\max}(-f)$. Since $\mathcal G_{F,s}$ is invariant under
$f\mapsto-f$, we have $-f\in\mathcal G_{F,s}$, and the maximizing case
applied to $-f$ gives the same conclusion.
\end{proof}

\begin{proof}[Proof of Corollary~\ref{cor:pq}]
Take $F=\{p,q\}$, then the semigroup $\Scal_F$ is nonlacunary.
The proof follows from Theorem~\ref{thm:main}, with the choice 
$\mathcal G_{p,q,s}=\mathcal G_{\{p,q\},s}$.
\end{proof}

\begin{proof}[Proof of Corollary~\ref{cor:simultaneous}]
Fix $1\le s\le\infty$.
There are countably many nonempty finite subsets of $\N_{\ge2}$.
Thus
$$
  \mathcal G_{*,s}
  =\bigcap_{\substack{F\subset\N_{\ge2}\text{ finite}\\
                      \Scal_F\text{ nonlacunary}}}
     \mathcal G_{F,s}
$$
is dense and $G_\delta$ by the Baire category theorem.
\end{proof}

\begin{remark}[The uniform topology]\label{rem:continuous}
In this remark, we show that the zero-entropy conclusion of Theorem~\ref{thm:main} also
holds for continuous potentials when $C(\T)$ is equipped with the uniform
norm. Fix $F$ as in Theorem~\ref{thm:main}, and define
$$
\begin{aligned}
  \mathcal A^{\max}_{\lambda,F,C}
  &=\{f\in C(\T):\lambda\in\Mcal_F^{\max}(f)\},\\
  \mathcal A^{\min}_{\lambda,F,C}
  &=\{f\in C(\T):\lambda\in\Mcal_F^{\min}(f)\}.
\end{aligned}
$$
The estimates used in Proposition~\ref{prop:lambda-exclusion} show that these
sets are closed in the uniform topology. To prove that they have empty
interior, let
$$
  m_k=kN_F+1,
  \qquad \mu_k=\nu_{m_k},
  \qquad v_k(x)=\cos(2\pi m_kx).
$$
Since $\gcd(m_k,N_F)=1$,
$\mu_k\in\Mcal_F$. Moreover, $\|v_k\|_\infty=1$, every point in the
support of $\mu_k$ is a maximizer of $v_k$, and
$$
  \int v_k\,d\mu_k-
  \int v_k\,d\lambda=1.
$$
For every fixed $f\in C(\T)$, convergence of the Riemann sums gives
$$
  D_k:=\int f\,d\mu_k-
       \int f\,d\lambda\longrightarrow0.
$$
Given $\varepsilon>0$, choose $0<\delta<\varepsilon$ and then choose $k$
large enough that $|D_k|<\delta$. If
$f\in\mathcal A^{\max}_{\lambda,F,C}$, then
$\|f+\delta v_k-f\|_\infty<\varepsilon$ and
$$
  \int(f+\delta v_k)\,d\mu_k
  -\int(f+\delta v_k)\,d\lambda
  =D_k+\delta>0.
$$
Hence $f+\delta v_k\notin\mathcal A^{\max}_{\lambda,F,C}$. If instead
$f\in\mathcal A^{\min}_{\lambda,F,C}$, then
$\|f-\delta v_k-f\|_\infty<\varepsilon$ and
$$
  \int(f-\delta v_k)\,d\mu_k
  -\int(f-\delta v_k)\,d\lambda
  =D_k-\delta<0,
$$
so $f-\delta v_k\notin\mathcal A^{\min}_{\lambda,F,C}$. Thus both exceptional
sets are closed and nowhere dense, and
$$
  \mathcal G_{F,C}:=C(\T)\setminus
  \bigl(\mathcal A^{\max}_{\lambda,F,C}\cup
        \mathcal A^{\min}_{\lambda,F,C}\bigr)
$$
is open and dense. The entropy-rigidity
argument above then shows that every maximizing or minimizing measure for every
$f\in\mathcal G_{F,C}$ has zero entropy for all
$r\in\Scal_F\setminus\{1\}$. Since $C(\T)$ is a Banach space and there are
only countably many finite subsets of $\N_{\ge2}$,
$$
  \mathcal G_{*,C}
  :=\bigcap_{\substack{F\subset\N_{\ge2}\text{ finite}\\
                       \Scal_F\text{ nonlacunary}}}
       \mathcal G_{F,C}
$$
is a dense $G_\delta$ subset of $C(\T)$ on which this conclusion holds
simultaneously for all finitely generated nonlacunary multiplicative
semigroups of positive integers.

For $1\le s\le\infty$, one has, as sets,
$$
  \mathcal G_{F,s}=\mathcal G_{F,C}\cap\Bcal_s.
$$

Nevertheless, density of $ \mathcal G_{F,C}$ in the uniform topology does not
automatically imply density of $ \mathcal G_{F,C}$ in the stronger $W^{1,s}$
topology. The reason is straightforward: in the uniform topology, no
quantitative quadrature estimate is needed because the unscaled functions
$v_k$ are already uniformly bounded in $C(\T)$. For this perturbation
family, however, the situation changes in $W^{1,s}$. Since
$\|v_k'\|_{L^s}$ grows like $m_k$, we must rescale $v_k$ by $m_k^{-1}$
to keep the $W^{1,s}$ norms under control. This rescaling shifts the
relevant separation scale to $m_k^{-1}$, which explains why the Sobolev
argument requires the $o(m_k^{-1})$ estimate in Lemma~\ref{lem:grid}.
\end{remark}

\section*{Acknowledgments}

The author thanks Professor Rui Gao and Professor Oliver Jenkinson for helpful discussions and
valuable suggestions.

\end{document}